\newtheorem{Def}{Definition}[section]
\newtheorem{Th}[Def]{Theorem}
\newtheorem{Le}[Def]{Lemma}
\newtheorem{Ex}[Def]{Example}
\newtheorem{Co}[Def]{Corollary}
\newtheorem{Pro}[Def]{Proposition}
\begin{document}
\title{A note on conductor ideals}
\author{Andreas Reinhart}
\address{Institut f\"ur Mathematik und wissenschaftliches Rechnen, Karl-Franzens-Universit\"at, NAWI Graz, Heinrichstrasse 36, 8010 Graz, Austria}
\email{andreas.reinhart@uni-graz.at}
\subjclass[2010]{13A15, 13F05}
\keywords{conductor, Dedekind domain, ring extension}
\begin{abstract}
Let $S$ be a commutative ring with identity and $R$ a unitary subring of $S$. An ideal $I$ of $S$ is called an $R$-conductor ideal of $S$ if $I=\{x\in S\mid xS\subseteq V\}$ for some intermediate ring $V$ of $R$ and $S$. In this note we present necessary and sufficient criterions for being an $R$-conductor ideal of $S$. We generalize several well known facts about them and present a simple approach to rediscover the results of both old and recent papers. We sketch the boundaries of our criterions by providing a few counterexamples.
\end{abstract}

\maketitle
\leftmargini25pt

\section{Introduction}

Let $K$ be an algebraic number field, $\mathcal{O}_K$ the principal order of $K$ and $\mathcal{O}\subseteq\mathcal{O}_K$ an arbitrary order in $K$. The conductor of the order $\mathcal{O}$ in the principal order $\mathcal{O}_K$ (i.e., $\{x\in\mathcal{O}_K\mid x\mathcal{O}_K\subseteq\mathcal{O}\}$) plays a central role by the investigation of non-principal orders in number fields. For instance, it can be used to describe the behavior of non-unique factorizations in $\mathcal{O}$ (see \cite[Theorems 2.11.9 and 3.7.1]{GHK} for more details). It is obvious that $\{x\in\mathcal{O}_K\mid x\mathcal{O}_K\subseteq\mathcal{O}\}$ is an ideal of $\mathcal{O}_K$. Therefore, it is natural to ask which ideals of $\mathcal{O}_K$ are the conductor of an order in $K$. This question has been answered by P. Furtw\"angler (see \cite{F}), who provided a complete description of these ideals. A few years later, H. Grell (see \cite{Gr}) studied Furtw\"angler's problem in a more general setting. Recently, Furtw\"angler's result attracted some attention \cite{LP,LPP,P} and was proved by using a different method.

In this note we extend the ideas of \cite{F,Gr,LP,P} and prove corresponding results in the setting of commutative rings with identity. Our main result is Theorem 2.7 which easily implies many of the prior results. We replace $\mathcal{O}_K$ by an arbitrary commutative ring $S$ with identity, we replace $\mathbb{Z}$ by a unitary subring $R$ of $S$, and then we ask which ideals of $S$ can appear as the conductor of an intermediate ring of $R$ and $S$ in $S$. In particular, our main result implies \cite[Theorem 1.2]{LP}. We provide necessary and sufficient criteria for being the conductor of an intermediate ring of $R$ and $S$ in $S$. Furthermore, we present a few counterexamples to outline the limitations of our results.

The main result of this work can easily be used to describe the ideals of a commutative Noetherian ring $S$ with identity which are the conductor of a unitary subring of $S$ in $S$. (Note that the prime subring of $S$ is a principal ideal ring.) This characterization could be a starting point for further research on this topic (especially when $S$ is not Noetherian).
\section{Results}

In this work all rings are commutative with an identity element, and every subring of a ring contains the identity of the extension ring. Let $S$ be a ring, $R$ a subring of $S$, and $I$ an ideal of $S$. For $X,Y,Z\subseteq S$ set $(X:_Y Z)=\{x\in Y\mid xZ\subseteq X\}$. We say that $I$ is an $R$-conductor ideal of $S$ if $I=(V:_S S)$ for some intermediate ring $R\subseteq V\subseteq S$. Note that if $I$ is an $R$-conductor ideal of $S$, then $(R:_S S)\subseteq I$. Set $\mathcal{V}(I)=\{M\in {\rm{spec}}(S)\mid I\subseteq M\}$. First we start with a few simple observations.

\begin{Le} Let $S$ be a ring, $R$ a subring of $S$, and $I$ and $J$ ideals of $S$.
\begin{itemize}
\item[\textbf{\textnormal{1.}}] If $R\subseteq V\subseteq S$ is an intermediate ring, then $(R+(V:_S S):_S S)=(V:_S S)$.\\
In particular, $I$ is an $R$-conductor ideal of $S$ if and only if $(R+I:_S S)=I$.
\item[\textbf{\textnormal{2.}}] If $I\subsetneqq J\subseteq (R+I:_S S)$, then $(I:_R (I:_S J))\nsubseteq I$.\\
In particular, $I$ is an $R$-conductor ideal of $S$ if and only if $(I:_R (I:_S (R+I:_S S)))\subseteq I$.
\item[\textbf{\textnormal{3.}}] If $I$ is an $R$-conductor ideal of $S$, then $R+J\subsetneqq S$ or $(I:_R J)\subseteq I$.
\item[\textbf{\textnormal{4.}}] If $I\subsetneqq J$ and $S/I$ is a Noetherian ring, then there is some ideal $L$ of $S$ such that $I\subsetneqq L\subseteq J$, and $(I:_S L)\in\mathcal{V}(I)$.
\end{itemize}
\end{Le}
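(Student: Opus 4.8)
The plan is to recognize statement~4 as a disguised form of the classical existence-of-associated-primes argument, carried out among the ideals of $S$ containing $I$ (equivalently, inside the Noetherian quotient $S/I$). Concretely, I would consider the family
\[
\Sigma=\{(I:_S aS)\mid a\in J\setminus I\}.
\]
Each member contains $I$ (as $I$ is an ideal, so $I\cdot aS\subseteq I$), and each is a proper ideal of $S$, since $a\notin I$ forces $1\notin(I:_S aS)$. Moreover $\Sigma\neq\emptyset$ because $I\subsetneqq J$. Since $S/I$ is Noetherian, the ideals of $S$ containing $I$ satisfy the ascending chain condition, so $\Sigma$ has a maximal element $P=(I:_S a_0S)$ for some $a_0\in J\setminus I$.

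The heart of the argument is to show that $P$ is prime. Suppose $x,y\in S$ with $xy\in P$ but $y\notin P$. Then $xya_0\in I$ while $ya_0\notin I$. Put $b=ya_0$; crucially $b\in J\setminus I$, since $a_0\in J$ and $J$ is an ideal. Because $za_0\in I$ implies $zb=zya_0\in I$, we have $P=(I:_S a_0S)\subseteq(I:_S bS)$, and $(I:_S bS)\in\Sigma$, so maximality of $P$ forces $(I:_S bS)=P$. As $xb=xya_0\in I$ gives $x\in(I:_S bS)=P$, we conclude that $P$ is prime.

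Finally I would set $L=a_0S+I$. Then $I\subsetneqq L\subseteq J$ (using $a_0\in J\setminus I$), and one checks the reduction identity $(I:_S L)=(I:_S a_0S+I)=(I:_S a_0S)=P$, since $(I:_S I)=S$. Thus $(I:_S L)=P$ is a prime ideal containing $I$, that is $(I:_S L)\in\mathcal{V}(I)$, as required. The only steps requiring care are the observation that $b=ya_0$ remains in $J\setminus I$ — this is what keeps $(I:_S bS)$ inside $\Sigma$ so that maximality applies — and the reduction identity for $(I:_S a_0S+I)$, both of which are routine. I expect the primeness verification to be the main (and essentially only) point of substance, and it is exactly the standard maximal-annihilator argument transported into the partially ordered set of ideals lying above $I$.
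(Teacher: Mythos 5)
Your proposal addresses only part~4; parts~1--3 are never discussed, so as a proof of the lemma as stated it is incomplete. Those parts are routine colon-ideal manipulations (e.g., part~1 follows from $R+(V:_S S)\subseteq V$ together with the fact that $(V:_S S)$ is an ideal of $S$, and part~3 from $(I:_R J)(R+J)\subseteq R+I$), but they are what the rest of the paper actually invokes, so a complete write-up must include them.

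For part~4 itself your argument is correct, and every step checks out: each member of $\Sigma$ contains $I$ and is proper because $a\in aS$ forces $1\notin(I:_S aS)$ when $a\notin I$; the ACC on ideals of $S$ containing $I$ (equivalently, Noetherianness of $S/I$) gives a maximal element $P=(I:_S a_0S)$; the passage to $b=ya_0$ keeps $b\in J\setminus I$, so maximality applies and yields primeness; and the reduction $(I:_S a_0S+I)=(I:_S a_0S)$ holds since $(I:_S I)=S$. Your route differs from the paper's in one respect: the paper takes a maximal element of $\{(I:_S A)\mid A \text{ an ideal of } S,\ I\subsetneqq A\subseteq J\}$, so the maximizing object is already the desired $L$, and primeness is checked by comparing $(I:_S L)$ with $(I:_S I+xL)$ via maximality; you instead maximize over cyclic submodules, i.e., over elements $a\in J\setminus I$, which is the classical maximal-annihilator (associated prime) argument, and you must then assemble $L=a_0S+I$ at the end. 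The paper explicitly acknowledges your variant in the remark immediately after the lemma (``Lemma 2.1.4 can also be proved by using the fact that every module over a Noetherian ring possesses an associated prime ideal''), so your proof of part~4 is a legitimate and essentially equivalent alternative: it has the advantage of reducing to a standard textbook fact, while the paper's version stays at the level of ideals of $S$ and avoids the final assembly step.
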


\begin{proof} \textbf{1.} Let $R\subseteq V\subseteq S$ be an intermediate ring. We need to show that $(R+(V:_S S):_S S)=(V:_S S)$. ``$\subseteq$'': We have $R+(V:_S S)=R+(V:_V S)\subseteq V$, hence $(R+(V:_S S):_S S)\subseteq (V:_S S)$. ``$\supseteq$'': Clearly, $(V:_S S)$ is an ideal of $S$, and thus $(V:_S S)S=(V:_S S)\subseteq R+(V:_S S)$. Consequently, $(V:_S S)\subseteq (R+(V:_S S):_S S)$.\\
\textbf{2.} Let $I\subsetneqq J\subseteq (R+I:_S S)$. It follows that $J=J\cap (R+I)=(J\cap R)+I$, and thus $J\cap R\nsubseteq I$. Since $J(I:_S J)\subseteq I$, we obtain that $J\subseteq (I:_S (I:_S J))$, hence $J\cap R\subseteq (I:_R (I:_S J))$. This implies that $(I:_R (I:_S J))\nsubseteq I$.\\
If $I$ is not an $R$-conductor ideal of $S$, then it follows by 1 that $I\subsetneqq (R+I:_S S)$. Therefore,\linebreak
$(I:_R (I:_S (R+I:_S S)))\nsubseteq I$. Now let $I$ be an $R$-conductor ideal of $S$. By 1 we infer that\linebreak
$(I:_R (I:_S (R+I:_S S)))=(I:_R (I:_S I))=(I:_R S)=I\cap R\subseteq I$.\\
\textbf{3.} Let $I$ be an $R$-conductor ideal of $S$, and $R+J=S$. We have $(I:_R J)S=(I:_R J)(R+J)\subseteq R+I$. Consequently, $(I:_R J)\subseteq (R+I:_S S)=I$.\\
\textbf{4.} Let $I\subsetneqq J$ and $S/I$ a Noetherian ring. There is some ideal $L$ of $S$ such that $I\subsetneqq L\subseteq J$, and $(I:_S L)$ is a maximal element of $\{(I:_S A)\mid A$ is an ideal of $S$ such that $I\subsetneqq A\subseteq J\}$. Observe that $I\subseteq (I:_S L)\subsetneqq S$. We show that $(I:_S L)\in\mathcal{V}(I)$. Let $x,y\in S$ be such that $xy\in (I:_S L)$ and $x\not\in (I:_S L)$. We have $xyL\subseteq I$, and $xL\nsubseteq I$. This implies that $I\subsetneqq I+xL\subseteq L\subseteq J$, hence $(I:_S I+xL)=(I:_S L)$. Since $y(I+xL)=yI+xyL\subseteq I$, we infer that $y\in (I:_S L)$.
\end{proof}

Note that Lemma 2.1.4 can also be proved by using the fact that every module over a Noetherian ring possesses an associated prime ideal.

\begin{Le} Let $S$ be a ring, $R$ a subring of $S$, $T$ a non-empty set, and $(I_i)_{i\in T}$ a family of $R$-conductor ideals of $S$. Then $\bigcap_{i\in T} I_i$ is an $R$-conductor ideal of $S$. In particular, if $n\in\mathbb{N}$, and $(J_i)_{i=1}^n$ is a finite sequence of pairwise coprime $R$-conductor ideals of $S$, then $\prod_{i=1}^n J_i$ is an $R$-conductor ideal of $S$.
\end{Le}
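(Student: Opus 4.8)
The plan is to reduce everything to the characterization from Lemma 2.1.1, namely that an ideal $I$ is an $R$-conductor ideal of $S$ if and only if $(R+I:_S S)=I$. So I would translate both the arbitrary-intersection claim and the finite-product claim into statements about this fixed-point condition, and the bulk of the work is the intersection case; the product case should follow from it.

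For the intersection, set $I=\bigcap_{i\in T}I_i$, which is visibly an ideal of $S$. I always have $I\subseteq(R+I:_S S)$ (this is the easy ``$\supseteq$'' direction, exactly as in the proof of Lemma 2.1.1, since $(R+I:_S S)\supseteq I$ because $IS=I\subseteq R+I$). So the real task is the reverse inclusion $(R+I:_S S)\subseteq I$. I would take $x\in(R+I:_S S)$, i.e. $xS\subseteq R+I$, and try to show $x\in I_i$ for every $i$. The natural move is to compare $R+I$ with $R+I_i$: since $I\subseteq I_i$, I get $R+I\subseteq R+I_i$, hence $xS\subseteq R+I\subseteq R+I_i$, which says $x\in(R+I_i:_S S)=I_i$, using that each $I_i$ is an $R$-conductor ideal. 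Taking the intersection over $i$ gives $x\in\bigcap_i I_i=I$, as desired. This step is clean and is the whole argument for the intersection; the monotonicity $I\subseteq I_i\Rightarrow R+I\subseteq R+I_i$ and the definition of conductor ideal via Lemma 2.1.1 do all the work, so I expect no serious obstacle here.

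For the ``in particular'' about products, I would first observe that for pairwise coprime ideals $J_1,\dots,J_n$ the product and the intersection coincide, $\prod_{i=1}^n J_i=\bigcap_{i=1}^n J_i$. This is the standard fact that coprime ideals in a commutative ring satisfy $J_1\cdots J_n=J_1\cap\dots\cap J_n$, proved by induction: if $J$ and $J'$ are coprime then $J\cap J'=JJ'$ (from $J\cap J'=(J\cap J')(J+J')\subseteq JJ'\subseteq J\cap J'$), and one checks that a product of ideals each coprime to $J'$ is again coprime to $J'$ so the induction goes through. Once I have $\prod_i J_i=\bigcap_i J_i$, the first part of this very lemma applied to the finite family $(J_i)_{i=1}^n$ immediately gives that the intersection, hence the product, is an $R$-conductor ideal of $S$.

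The only point needing a little care is the verification that pairwise coprimality is preserved under taking products, so that the inductive reduction of $\prod J_i$ to $\bigcap J_i$ is legitimate; this is routine but should be stated, since it is where coprimality is genuinely used. Apart from that, I do not anticipate a real obstacle: the whole proof is essentially the observation that $x\mapsto(R+x:_S S)$ is monotone in a way compatible with intersections, combined with the classical coincidence of product and intersection for coprime ideals.
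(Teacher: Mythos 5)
Your proof is correct, but it runs along a different track than the paper's. For the intersection statement, the paper does not invoke Lemma 2.1.1 at all: it picks witnessing intermediate rings $V_i$ with $I_i=(V_i:_S S)$, observes that $V=\bigcap_{i\in T}V_i$ is again an intermediate ring of $R$ and $S$, and uses the identity $\bigcap_{i\in T}(V_i:_S S)=\bigl(\bigcap_{i\in T}V_i:_S S\bigr)$ to exhibit $\bigcap_{i\in T}I_i$ explicitly as the conductor of $V$. You instead verify the fixed-point criterion of Lemma 2.1.1 directly: the inclusion $I\subseteq(R+I:_S S)$ is automatic, and the reverse inclusion follows from the monotonicity $xS\subseteq R+I\subseteq R+I_i$ together with $(R+I_i:_S S)=I_i$. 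Both arguments are short and sound; the paper's has the advantage of producing a concrete intermediate ring realizing the intersection (and of being independent of Lemma 2.1.1), while yours isolates the reusable principle that $I\mapsto(R+I:_S S)$ is monotone and that conductor ideals are exactly its fixed points. For the \emph{in particular} statement, the paper offers no argument at all, implicitly relying on the standard fact that a product of pairwise coprime ideals equals their intersection; you supply this fact with its inductive proof, including the step that coprimality is preserved under products, which is exactly the point that needs checking. So your write-up is, if anything, more complete than the paper's on that half of the lemma.
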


\begin{proof} There is some family $(V_i)_{i\in T}$ of intermediate rings of $R$ and $S$ such that $I_i=(V_i:_S S)$ for all $i\in T$. Observe that $\bigcap_{i\in T} V_i$ is an intermediate ring of $R$ and $S$. Moreover, $\bigcap_{i\in T} I_i=\bigcap_{i\in T} (V_i:_S S)=(\bigcap_{i\in T} V_i:_S S)$, and thus $\bigcap_{i\in I} I_i$ is an $R$-conductor ideal of $S$.
\end{proof}

\begin{Pro} Let $S$ be a ring, $R$ a subring of $S$, and $I$ and $J$ ideals of $S$ such that $(I\cap R)+(J\cap R)=R$.
\begin{itemize}
\item[\textbf{\textnormal{1.}}] $(R+I)\cap (R+J)=R+IJ$, and $(R+I:_S S)(R+J:_S S)\subseteq (R+IJ:_S S)$.
\item[\textbf{\textnormal{2.}}] If $(R+IJ:_S S)=IJ$, then $I(R+J:_S S)=IJ$.\\
In particular, if $IJ$ is an $R$-conductor ideal of $S$, and $I$ is cancellative, then $J$ is an $R$-conductor ideal of $S$.
\end{itemize}
\end{Pro}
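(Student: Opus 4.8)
The plan is to extract from the hypothesis $(I\cap R)+(J\cap R)=R$ a single partition of unity and let it drive every inclusion: I fix $e\in I\cap R$ and $f\in J\cap R$ with $e+f=1$. For brevity I abbreviate $C=(R+I:_S S)$ and $D=(R+J:_S S)$, and I record three standing facts used repeatedly. First, $R+I$ and $R+J$ are intermediate rings of $R$ and $S$, since each is the sum of $R$ with an ideal of $S$. Second, for any ideal $L$ of $S$ one has $LS=L\subseteq R+L$, so $I\subseteq C$ and $J\subseteq D$. Third, directly from the definition, the conductor sends intersections to intersections: $((R+I)\cap(R+J):_S S)=C\cap D$.

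For the first assertion of Part 1 I would argue by double inclusion. The inclusion $R+IJ\subseteq(R+I)\cap(R+J)$ is immediate from $IJ\subseteq I\cap J$. For the reverse inclusion I take $x\in(R+I)\cap(R+J)$ and write $x=r_1+a=r_2+b$ with $r_1,r_2\in R$, $a\in I$ and $b\in J$. The crux is the decomposition $x=xe+xf$. Using the representation $x=r_2+b$ gives $xe=r_2e+be$ with $r_2e\in R$ and $be\in JI=IJ$, so $xe\in R+IJ$; symmetrically, using $x=r_1+a$ gives $xf=r_1f+af\in R+IJ$. Summing yields $x\in R+IJ$, which is the only point where coprimality is genuinely invoked.

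The second assertion of Part 1 then follows formally. By the first assertion and the intersection formula above, $(R+IJ:_S S)=((R+I)\cap(R+J):_S S)=C\cap D$. Since $C$ and $D$ are ideals of $S$, one always has $CD\subseteq C\cap D$, and this is exactly the claimed containment $CD\subseteq(R+IJ:_S S)$.

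For Part 2, assume $(R+IJ:_S S)=IJ$. Combining the second assertion of Part 1 with this hypothesis gives $CD\subseteq IJ$. Now the standing inclusions $I\subseteq C$ and $J\subseteq D$ squeeze $ID$ from both sides: $ID\subseteq CD\subseteq IJ$, while $IJ\subseteq ID$ because $J\subseteq D$. Hence $ID=IJ$, that is $I(R+J:_S S)=IJ$. For the concluding statement, if $IJ$ is an $R$-conductor ideal of $S$ then Lemma 2.1.1 yields $(R+IJ:_S S)=IJ$, so the identity just proved gives $I(R+J:_S S)=IJ$; cancelling the cancellative ideal $I$ produces $(R+J:_S S)=J$, and a final appeal to Lemma 2.1.1 shows that $J$ is an $R$-conductor ideal of $S$. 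The only genuine obstacle is the reverse inclusion in Part 1; everything afterward is bookkeeping with the two conductors $C$ and $D$.
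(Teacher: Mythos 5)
Your proof is correct, and it follows the paper's overall strategy (partition of unity $e+f=1$ with $e\in I\cap R$, $f\in J\cap R$; double inclusion for the ring identity; the same squeeze for Part 2), but two of your local arguments are genuinely different and slightly cleaner. For the inclusion $(R+I)\cap(R+J)\subseteq R+IJ$, the paper subtracts the correction term $af+be$ from $z=a+x=b+y$ and checks that the difference lies in $I\cap J$, which forces it to first record the auxiliary fact that $I+J=S$ implies $I\cap J=IJ$; your decomposition $x=xe+xf$ lands each summand directly in $R+IJ$ (via $be\in JI$ and $af\in IJ$) and thus bypasses the identity $I\cap J=IJ$ altogether. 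For the containment $(R+I:_S S)(R+J:_S S)\subseteq(R+IJ:_S S)$, the paper argues elementwise ($xyS\subseteq(R+I)\cap(R+J)=R+IJ$), whereas you combine the formal identity $((R+I)\cap(R+J):_S S)=C\cap D$ with $CD\subseteq C\cap D$; this actually proves the stronger statement that $(R+IJ:_S S)$ \emph{equals} $C\cap D$, not merely that it contains $CD$. Part 2 coincides with the paper's squeeze $IJ\subseteq I(R+J:_S S)\subseteq CD\subseteq(R+IJ:_S S)=IJ$, and your explicit treatment of the ``in particular'' clause (cancel $I$ from $I(R+J:_S S)=IJ$ and invoke Lemma 2.1.1 twice) is exactly what the paper leaves implicit.
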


\begin{proof} Observe that $I+J=S$, hence $I\cap J=IJ$.\\
\textbf{1.} First we prove that $(R+I)\cap (R+J)=R+IJ$. ``$\subseteq$'': Let $z\in (R+I)\cap (R+J)$. There are some $a,b\in R$, $x\in I$, $y\in J$, $e\in I\cap R$, $f\in J\cap R$ such that $z=a+x=b+y$, and $e+f=1$. We have $z-(af+be)=a+x-(af+be)=(a-b)e+x\in I$, and $z-(af+be)=b+y-(af+be)=(b-a)f+y\in J$. We infer that $z-(af+be)\in I\cap J=IJ$, and thus $z=af+be+z-(af+be)\in R+IJ$. ``$\supseteq$'': Trivial. Next we show that $(R+I:_S S)(R+J:_S S)\subseteq (R+IJ:_S S)$. Let $x\in (R+I:_S S)$, and $y\in (R+J:_S S)$. It follows that $xS\subseteq R+I$, and $yS\subseteq R+J$. This implies that $xyS\subseteq (R+I)\cap (R+J)=R+IJ$, hence $xy\in (R+IJ:_S S)$.\\
\textbf{2.} Let $(R+IJ:_S S)=IJ$. Observe that $I\subseteq (R+I:_S S)$, and $J\subseteq (R+J:_S S)$. Therefore, we have by 1 that $IJ\subseteq I(R+J:_S S)\subseteq (R+I:_S S)(R+J:_S S)\subseteq (R+IJ:_S S)=IJ$, and thus $I(R+J:_S S)=IJ$.
\end{proof}

\begin{Co} Let $S$ be a ring, $R$ a subring of $S$, $n\in\mathbb{N}$, and $(I_i)_{i=1}^n$ a finite sequence of cancellative ideals of $S$ such that $(I_i\cap R)+(I_j\cap R)=R$ for all $i,j\in [1,n]$ such that $i\not=j$. Then $\prod_{i=1}^n I_i$ is an $R$-conductor ideal of $S$ if and only if $I_i$ is an $R$-conductor ideal of $S$ for all $i\in [1,n]$.
\end{Co}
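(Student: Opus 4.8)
The plan is to prove the two implications separately, leaning on Lemma 2.2 for one direction and on the ``in particular'' part of Proposition 2.3.2 for the other. The crucial preliminary observation is that the hypothesis $(I_i\cap R)+(I_j\cap R)=R$ forces $I_i+I_j=S$ whenever $i\not=j$; hence the $I_i$ are pairwise coprime in $S$, so that $\prod_{i=1}^n I_i=\bigcap_{i=1}^n I_i$ and, more generally, every subproduct coincides with the corresponding intersection. The case $n=1$ is trivial, so I may assume $n\geq 2$.

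For the implication ``$\Leftarrow$'', suppose each $I_i$ is an $R$-conductor ideal. Since the $I_i$ are pairwise coprime, they form a finite sequence of pairwise coprime $R$-conductor ideals, and the ``in particular'' part of Lemma 2.2 immediately yields that $\prod_{i=1}^n I_i$ is an $R$-conductor ideal.

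For the implication ``$\Rightarrow$'', fix $i\in[1,n]$ and set $I=\prod_{j\not=i}I_j$ and $J=I_i$, so that $IJ=\prod_{k=1}^n I_k$ is an $R$-conductor ideal by assumption. I intend to apply the ``in particular'' statement of Proposition 2.3.2, which would give that $J=I_i$ is an $R$-conductor ideal. To do so I must verify its two hypotheses: that $I$ is cancellative, and that $(I\cap R)+(J\cap R)=R$. The first is routine, since a product of cancellative ideals is again cancellative (one reduces $(IJ)A=(IJ)B$ to $A=B$ by cancelling one factor at a time), and as $n\geq 2$ the index set $\{j:j\not=i\}$ is nonempty, so $I$ is a genuine product of cancellative ideals.

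The main obstacle is the verification of $(I\cap R)+(I_i\cap R)=R$, that is, passing from the pairwise coprimality of the contractions to coprimality of $\prod_{j\not=i}(I_j\cap R)$ against $I_i\cap R$. I plan to argue entirely inside $R$. First, $\prod_{j\not=i}(I_j\cap R)\subseteq\big(\prod_{j\not=i}I_j\big)\cap R=I\cap R$, because a product of elements of $R$ lies both in $R$ and in $\prod_{j\not=i}I_j$. Second, I use the standard fact that an ideal coprime to each of several ideals is coprime to their product: iterating $R=(A+C)(B+C)\subseteq AB+C$, the relations $(I_j\cap R)+(I_i\cap R)=R$ for all $j\not=i$ give $\prod_{j\not=i}(I_j\cap R)+(I_i\cap R)=R$. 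Combining these, $R=\prod_{j\not=i}(I_j\cap R)+(I_i\cap R)\subseteq(I\cap R)+(I_i\cap R)\subseteq R$, so equality holds. Proposition 2.3.2 then applies and shows that $I_i$ is an $R$-conductor ideal, completing the proof.
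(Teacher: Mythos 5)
Your proposal is correct and follows exactly the route the paper intends: the paper's proof is the one-line citation ``This is an immediate consequence of Lemma 2.2 and Proposition 2.3.2,'' and you have simply filled in the implicit details (coprimality in $S$ from coprimality of the contractions, cancellativity of the product $\prod_{j\not=i}I_j$, and the verification that $\bigl(\prod_{j\not=i}I_j\cap R\bigr)+(I_i\cap R)=R$ via $\prod_{j\not=i}(I_j\cap R)\subseteq I\cap R$). All of these verifications are sound, so your write-up is a faithful, fleshed-out version of the paper's argument.
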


\begin{proof} This is an immediate consequence of Lemma 2.2 and Proposition 2.3.2.
\end{proof}

Observe that if $K$ is an algebraic number field, $\mathcal{O}_K$ is its principal order, and $I$ is a nonzero ideal of $\mathcal{O}_K$, then $\mathbb{Z}+I$ is an order in $K$. In particular, $I$ is a $\mathbb{Z}$-conductor ideal of $\mathcal{O}_K$ if and only if $I$ is the conductor of an order in $K$ by Lemma 2.1.1. Because of this it follows that Corollary 2.4 is a (partial) generalization of \cite[Theorem 1.1]{LP}. The next goal is to characterize maximal ideals of $S$ which are $R$-conductor ideals of $S$.

\begin{Le} Let $S$ be a ring and $R$ a subring of $S$ such that $S$ is a cyclic $R$-module. Then $S=R$.
\end{Le}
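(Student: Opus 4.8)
The plan is to exploit the fact that a generator of the cyclic $R$-module structure on $S$ must already be a unit of $S$ whose inverse lies in $R$; once that is established, the conclusion $S=R$ drops out by multiplying the defining equation by this inverse.

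First I would use the hypothesis to fix an element $s\in S$ with $S=Rs$. Since $1\in S=Rs$, there is some $r_0\in R$ with $r_0 s=1$. Because $S$ is commutative, this already says that $s$ is a unit of $S$ with $s^{-1}=r_0\in R$. This is the crucial observation: cyclicity of $S$ as an $R$-module, combined with the fact that $1\in S$, pins the generator down to an invertible element whose inverse is \emph{not merely} in $S$ but in the subring $R$.

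Next I would verify two things about multiplication by $r_0$. On one hand, multiplication by the unit $r_0$ carries $S$ onto $S$: the inclusion $r_0 S\subseteq S$ is clear, and conversely every $u\in S$ satisfies $u=r_0(su)$ with $su\in S$, so $u\in r_0 S$; hence $r_0 S=S$. On the other hand, since $r_0\in R$ and $R$ is commutative, every generator-type element $rs$ (with $r\in R$) satisfies $r_0(rs)=r(r_0 s)=r$, so $r_0(Rs)=R$. Chaining these equalities gives $S=r_0 S=r_0(Rs)=R$, which is the claim.

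The argument is very short, and I do not expect a genuine obstacle: the only point that could require care is the surjectivity bookkeeping for multiplication by $r_0$, but commutativity of $S$ makes this immediate. The single idea carrying the whole proof is that a cyclic module structure on a ring containing $1$ forces its generator to be invertible with inverse inside the coefficient ring.
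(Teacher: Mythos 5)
Your proof is correct and takes essentially the same approach as the paper: both fix a generator ($S=Rs$), extract the relation $1=r_0s$ with $r_0\in R$, and then cancel using the multiplicative closure of $S$ --- the paper via $x^2R=S^2=S=xR$, you via $u=r_0(su)$ to get $r_0S=S$. The two arguments are the same computation in slightly different packaging.
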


\begin{proof} There is some $x\in S$ such that $S=xR$. Therefore, $1=yx$ for some $y\in R$. Moreover, $x^2R=S^2=S=xR$, and thus $S=xR=yx^2R=yxR=R$.
\end{proof}

\begin{Pro} Let $S$ be a ring, $R$ a subring of $S$, $I$ an ideal of $S$, and $M\in\max(S)$.
\begin{itemize}
\item[\textbf{\textnormal{1.}}] $S=R+I$ if and only if $S/I$ is a cyclic $R$-module.
\item[\textbf{\textnormal{2.}}] The following are equivalent:
\begin{itemize}
\item[\textbf{\textnormal{a.}}] $M$ is an $R$-conductor ideal of $S$.
\item[\textbf{\textnormal{b.}}] $R+M\subsetneqq S$.
\item[\textbf{\textnormal{c.}}] $S/M$ is not a cyclic $R$-module.
\item[\textbf{\textnormal{d.}}] If $M\cap R\in\max(R)$, then $\dim_{R/M\cap R}(S/M)\geq 2$.
\end{itemize}
\end{itemize}
\end{Pro}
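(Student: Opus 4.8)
The plan is to deduce the whole proposition from Lemma 2.5 and Lemma 2.1.1, treating statement 1 as the engine and statement 2 as a chain of equivalences built on top of it.

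First I would prove statement 1. The direction ``$S=R+I\Rightarrow S/I$ cyclic'' is routine: if $S=R+I$, then writing $s=r+x$ with $r\in R$ and $x\in I$ gives $s+I=r(1+I)$, so $1+I$ generates $S/I$ as an $R$-module. The substantive direction is the converse, and the plan is to reduce it to Lemma 2.5. Passing to the quotient ring $S/I$ together with its subring $(R+I)/I$, the $R$-module structure on $S/I$ factors through, and coincides with, the $(R+I)/I$-module structure (the action of $r\in R$ is multiplication by $r+I$). Hence if $S/I$ is a cyclic $R$-module it is a cyclic $(R+I)/I$-module, and Lemma 2.5 forces $S/I=(R+I)/I$, i.e. $S=R+I$. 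This reduction is the one genuinely creative step.

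For statement 2, the equivalence of \textbf{b} and \textbf{c} is then immediate from statement 1 applied to $I=M$. For \textbf{a} $\Leftrightarrow$ \textbf{b}, I would invoke Lemma 2.1.1, by which $M$ is an $R$-conductor ideal if and only if $(R+M:_S S)=M$. The key observations are that $(R+M:_S S)$ is an ideal of $S$ with $M\subseteq(R+M:_S S)\subseteq R+M$ (the right-hand inclusion because $x\in(R+M:_S S)$ forces $x=x\cdot 1\in R+M$). Since $M$ is maximal, this ideal is either $M$ or $S$, and it equals $S$ exactly when $R+M=S$; therefore $(R+M:_S S)=M$ if and only if $R+M\subsetneqq S$, which is \textbf{a} $\Leftrightarrow$ \textbf{b}.

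Finally I would handle \textbf{c} $\Leftrightarrow$ \textbf{d}. When $M\cap R\in\max(R)$, the residue field $R/(M\cap R)$ embeds into the field $S/M$, the $R$-action on $S/M$ factors through $R/(M\cap R)$, and $S/M$ becomes a nonzero vector space over $R/(M\cap R)$; such a space is a cyclic module precisely when it is one-dimensional. Thus under the hypothesis $M\cap R\in\max(R)$, condition \textbf{c} is equivalent to $\dim_{R/M\cap R}(S/M)\geq 2$, which yields \textbf{c} $\Rightarrow$ \textbf{d}. For \textbf{d} $\Rightarrow$ \textbf{c} I would argue contrapositively: if $S/M$ is a cyclic $R$-module, then $S=R+M$ by statement 1, so the composite $R\hookrightarrow S\twoheadrightarrow S/M$ is surjective with kernel $M\cap R$, giving $R/(M\cap R)\cong S/M$. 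Hence $M\cap R\in\max(R)$ and $\dim_{R/M\cap R}(S/M)=1$, contradicting \textbf{d}. The main obstacle I anticipate is precisely this logical asymmetry: because \textbf{d} is itself a conditional, proving \textbf{d} $\Rightarrow$ \textbf{c} requires manufacturing the maximality of $M\cap R$ out of the cyclicity of $S/M$ rather than assuming it, so one cannot simply quote the ``field'' picture but must first establish that the hypothesis of \textbf{d} is triggered.
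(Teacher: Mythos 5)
Your proposal is correct and follows essentially the same route as the paper: statement 1 is reduced to Lemma 2.5 via the subring $(R+I)/I$, the equivalence \textbf{a}$\Leftrightarrow$\textbf{b} comes from Lemma 2.1.1 together with the maximality of $M$, \textbf{b}$\Leftrightarrow$\textbf{c} follows from statement 1, and the equivalence with \textbf{d} rests on the isomorphism $R/(M\cap R)\cong (R+M)/M$ inside the field $S/M$. The only cosmetic difference is that the paper closes the cycle by proving \textbf{d}$\Rightarrow$\textbf{b} directly via a case distinction on whether $M\cap R$ is maximal, whereas you prove \textbf{d}$\Rightarrow$\textbf{c} contrapositively; the underlying argument is the same.
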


\begin{proof} \textbf{1.} ``$\Rightarrow$'': Let $S=R+I$. Then $S/I=R+I/I=(1+I)_R$. ``$\Leftarrow$'': Let $S/I$ be a cyclic $R$-module. Then $S/I$ is a cyclic $R+I/I$-module. Lemma 2.5 implies that $S/I=R+I/I$, hence $S=R+I$.\\
\textbf{2.} $\textbf{a.}\Leftrightarrow\textbf{b.}$ By Lemma 2.1.1 we have $M$ is an $R$-conductor ideal of $S$ if and only if $M=(R+M:_S S)$ if and only if $(R+M:_S S)\subsetneqq S$ if and only if $R+M\subsetneqq S$.\\
$\textbf{b.}\Leftrightarrow\textbf{c.}$ This is an immediate consequence of 1.\\
$\textbf{c.}\Rightarrow\textbf{d.}$ Let $M\cap R\in\max(R)$. Since $S/M$ is not a cyclic $R$-module, we have $S/M$ is not a cyclic $R/M\cap R$-vector space, and thus $\dim_{R/M\cap R}(S/M)\geq 2$.\\
$\textbf{d.}\Rightarrow\textbf{b.}$ Case 1: $M\cap R\not\in\max(R)$. Then $R+M/M\cong R/M\cap R$ is not a field, hence $R+M/M\subsetneqq S/M$.\\
Case 2: $M\cap R\in\max(R)$. Then $\dim_{R/M\cap R}(S/M)\geq 2$. Since $R+M/M\cong R/M\cap R$ we infer that $R+M/M\subsetneqq S/M$.\\
In any case we have $R+M\subsetneqq S$.
\end{proof}

Now we are prepared to prove the main result of this paper.

\begin{Th} Let $S$ be a ring, $R$ a subring of $S$, and $I$ an ideal of $S$ such that $S/I$ is a Noetherian ring and $R/I\cap R$ is a principal ideal ring. Then $I$ is an $R$-conductor ideal of $S$ if and only if for every $M\in\mathcal{V}(I)$ we have $R+M\subsetneqq S$ or $(I:_R M)\subseteq I$.
\end{Th}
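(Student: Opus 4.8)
The plan is to prove the two implications separately, with the forward direction being essentially free and the reverse direction carrying all the weight.

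For the implication ``$\Rightarrow$'', suppose $I$ is an $R$-conductor ideal. Every $M\in\mathcal{V}(I)$ is in particular an ideal of $S$, so applying Lemma 2.1.3 with $J=M$ immediately yields $R+M\subsetneqq S$ or $(I:_R M)\subseteq I$. Note that no hypothesis on $S/I$ or $R/I\cap R$ is needed for this direction.

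For the implication ``$\Leftarrow$'', I would argue by contradiction. Assume the stated condition holds but $I$ is not an $R$-conductor ideal. By Lemma 2.1.1 this means $I\subsetneqq J$, where $J=(R+I:_S S)$ (and $J\subseteq R+I$). The goal is to manufacture a single prime $M\in\mathcal{V}(I)$ that simultaneously satisfies $R+M=S$ and $(I:_R M)\nsubseteq I$, contradicting the hypothesis. First I would exploit that $R/I\cap R$ is a principal ideal ring: since $J/I$ is an ideal of $(R+I)/I\cong R/I\cap R$, it is principal, so $J=\alpha R+I$ for some $\alpha\in J\cap R$. Setting $N=(I:_S\alpha)$, I would check $S=R+N$: for $s\in S$ one has $\alpha s\in J$, hence $\alpha s+I=(\alpha+I)(r+I)$ for some $r\in R$, so $\alpha(s-r)\in I$ and $s-r\in N$. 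Next I would feed $J$ into the Noetherian machinery: Lemma 2.1.4 produces an ideal $L$ with $I\subsetneqq L\subseteq J$ and $M:=(I:_S L)\in\mathcal{V}(I)$, while Lemma 2.1.2 (applicable since $I\subsetneqq L\subseteq J=(R+I:_S S)$) gives $(I:_R M)=(I:_R(I:_S L))\nsubseteq I$.

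The crux, and the step I expect to be the main obstacle, is to show that this prime $M$ is automatically ``large'', i.e. that the prime handed to us by the Noetherian argument is forced to satisfy $R+M=S$ rather than merely some prime with nonzero colon ideal. Here the principal generator $\alpha$ is the bridge: since $L\subseteq J=\alpha R+I$, I would compute $NL\subseteq N\alpha R+NI\subseteq I$, using $N\alpha\subseteq I$ by the definition of $N$, whence $N\subseteq(I:_S L)=M$. Combined with $S=R+N$ this forces $R+M=S$, so $M$ violates the condition and gives the desired contradiction. It may be conceptually cleaner to first reduce to the case $I=0$ by passing to $S/I$ and $(R+I)/I$, under which $J$ becomes the conductor of the subring, $N$ becomes the annihilator of the principal generator, and the condition reads ``$\overline R+\overline M\subsetneqq\overline S$ or $\mathrm{Ann}_{\overline R}(\overline M)=0$''; but the direct colon-ideal computation above sidesteps the bookkeeping of that translation.
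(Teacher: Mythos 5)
Your proof is correct, and its overall skeleton matches the paper's: the forward direction is exactly Lemma 2.1.3 (and indeed needs no finiteness hypotheses), while the reverse direction produces, via Lemmas 2.1.1, 2.1.4 and 2.1.2, a prime $M=(I:_S L)\in\mathcal{V}(I)$ with $(I:_R M)\nsubseteq I$, and then uses the principal ideal ring hypothesis to force $R+M=S$. Where you genuinely diverge is in how that last step is executed. The paper applies the hypothesis to the sub-ideal handed over by Lemma 2.1.4: it shows that this ideal is cyclic modulo $I$ as an $R$-module (hence as an $S$-module), deduces the $S$-module isomorphism between $S/M$ and that quotient, concludes that $S/M$ is a cyclic $R$-module, and invokes Proposition 2.6.1 (which rests on Lemma 2.5). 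You instead apply the hypothesis to the full conductor $J=(R+I:_S S)$, extract a generator $\alpha\in J\cap R$ with $J=\alpha R+I$, set $N=(I:_S\alpha)$, verify $S=R+N$ by a direct two-line computation, and transfer this to $M$ via the containment $N\subseteq M$ coming from $NL\subseteq N\alpha R+NI\subseteq I$. Both arguments are sound; yours is more elementary and self-contained at the crux, dispensing with the module isomorphism and with Proposition 2.6.1 and Lemma 2.5 altogether, whereas the paper's version is more modular, since Proposition 2.6.1 is reused elsewhere (for the characterization of maximal conductor ideals and in Proposition 2.9). It is worth noting that your device --- applying the PIR hypothesis to the conductor $J$ itself and then pushing the conclusion $S=R+N$ into any prime containing $N$ (observe that your $N$ equals $(I:_S J)$, since $J=\alpha R+I$) --- is precisely the mechanism of the paper's proof of Proposition 2.9.2; in effect you fused that argument with the Noetherian half of the theorem so that both conclusions, $R+M=S$ and $(I:_R M)\nsubseteq I$, land on the same prime.
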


\begin{proof} First let $I$ be an $R$-conductor ideal of $S$. It follows from Lemma 2.1.3 that for every $M\in\mathcal{V}(I)$ we have $R+M\subsetneqq S$ or $(I:_R M)\subseteq I$.\\
Now let $I$ be not an $R$-conductor ideal of $S$. We show that $R+M=S$ and $(I:_R M)\nsubseteq I$ for some $M\in\mathcal{V}(I)$. Set $L=(R+I:_S S)$. Note that $I\subsetneqq L$ by Lemma 2.1.1. By Lemma 2.1.4 there is some ideal $J$ of $S$ such that $I\subsetneqq J\subseteq L$, and $(I:_S J)\in\mathcal{V}(I)$. Set $M=(I:_S J)$. By Lemma 2.1.2 we obtain that $(I:_R M)\nsubseteq I$. We have $J\cap R/I\cap R=(x+I\cap R)_R$ for some $x\in J\cap R$, and thus $J/I=J\cap (R+I)/I=(J\cap R)+I/I=(x+I)_R$. We infer that $J/I$ is a cyclic $S$-module, hence $S/M\cong_S J/I$. Therefore, $S/M\cong_R J/I$ is a cyclic $R$-module. It follows by Proposition 2.6.1 that $R+M=S$.
\end{proof}

\begin{Co} Let $S$ be a ring, $R$ a subring of $S$, and $I$ an ideal of $S$ such that $S/(R:_S S)$ is a Noetherian ring and $R/(R:_S S)$ is a principal ideal ring. Then $I$ is an $R$-conductor ideal of $S$ if and only if $(R:_S S)\subseteq I$, and for every $M\in\mathcal{V}(I)$ we have $R+M\subsetneqq S$ or $(I:_R M)\subseteq I$.
\end{Co}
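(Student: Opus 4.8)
The plan is to deduce this statement directly from Theorem 2.7, the point being that the single extra hypothesis $(R:_S S)\subseteq I$ is exactly what upgrades the ``global'' finiteness assumptions on $S/(R:_S S)$ and $R/(R:_S S)$ into the assumptions on $S/I$ and $R/(I\cap R)$ that Theorem 2.7 requires. So almost all of the work is already done, and what remains is a short reduction.

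First I would record two structural facts about the conductor $C:=(R:_S S)$. It is an ideal of $S$ contained in $R$: if $x\in C$ then $x=x\cdot 1\in xS\subseteq R$, so $C\subseteq R$, and for $s\in S$ we have $(xs)S\subseteq xS\subseteq R$, so $C$ is an ideal of $S$. Consequently, whenever $C\subseteq I$ we automatically get $C\subseteq I\cap R$. Now, assuming $C\subseteq I$, the third isomorphism theorem exhibits $S/I$ as a homomorphic image of $S/C$ and $R/(I\cap R)$ as a homomorphic image of $R/C$. Since a quotient of a Noetherian ring is Noetherian and a quotient of a principal ideal ring is a principal ideal ring, it follows that $S/I$ is Noetherian and $R/(I\cap R)$ is a principal ideal ring. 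Thus, under the assumption $C\subseteq I$, the full hypotheses of Theorem 2.7 hold for $I$.

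For the forward implication, suppose $I$ is an $R$-conductor ideal of $S$. Then $C\subseteq I$ by the observation following the definition of an $R$-conductor ideal, so the necessity of $C\subseteq I$ is immediate; and the condition that $R+M\subsetneqq S$ or $(I:_R M)\subseteq I$ for every $M\in\mathcal{V}(I)$ is precisely the conclusion of the forward implication of Theorem 2.7 (equivalently, a direct application of Lemma 2.1.3, which needs no finiteness hypothesis at all). For the backward implication, assume $C\subseteq I$ together with the stated condition on every $M\in\mathcal{V}(I)$. By the previous paragraph the hypotheses of Theorem 2.7 are satisfied, and so its backward implication yields that $I$ is an $R$-conductor ideal of $S$.

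I do not expect a genuine obstacle here: the entire content is the reduction, and the only points requiring care are the direction of the containment $C\subseteq I\cap R$ and the fact that both the Noetherian property and the principal-ideal-ring property descend to quotients. It may be worth emphasizing in the write-up that the assumption $C\subseteq I$ plays a double role, being both necessary (it is forced by $I$ being a conductor ideal) and the exact hypothesis that transfers the conditions on $S/C$ and $R/C$ to $S/I$ and $R/(I\cap R)$.
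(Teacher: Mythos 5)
Your proposal is correct and follows essentially the same route as the paper: the forward direction is the observation that conductor ideals contain $(R:_S S)$ together with Lemma 2.1.3, and the backward direction uses $(R:_S S)\subseteq I\cap R$ to pass the Noetherian and principal-ideal-ring hypotheses from $S/(R:_S S)$ and $R/(R:_S S)$ down to $S/I$ and $R/(I\cap R)$, after which Theorem 2.7 applies. The only cosmetic difference is that you spell out why $(R:_S S)$ is an ideal of $S$ contained in $R$, which the paper compresses into the identity $(R:_S S)=(R:_R S)$.
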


\begin{proof} ``$\Rightarrow$'': Obviously, if $I$ is an $R$-conductor ideal of $S$, then $(R:_S S)\subseteq I$. The second statement follows from Lemma 2.1.3.\\
``$\Leftarrow$'': Observe that $(R:_S S)=(R:_R S)\subseteq I\cap R$. Therefore, $S/I$ is an epimorphic image of $S/(R:_S S)$, and $R/I\cap R$ is an epimorphic image of $R/(R:_S S)$. Consequently, $S/I$ is a Noetherian ring and $R/I\cap R$ is a principal ideal ring. The assertion is now an immediate consequence of Theorem 2.7.
\end{proof}

For the sake of completeness we include two sufficient criteria for being an $R$-conductor ideal of $S$. The proof of the following result goes along the same lines as the proof of Theorem 2.7.

\begin{Pro} Let $S$ be a ring, $R$ a subring of $S$, and $I$ an ideal of $S$. Suppose that one of the following conditions holds\textnormal{:}
\begin{itemize}
\item[\textbf{\textnormal{1.}}] $S/I$ is a Noetherian ring, and $(I:_R M)\subseteq I$ for each $M\in\mathcal{V}(I)$.
\item[\textbf{\textnormal{2.}}] $R/I\cap R$ is a principal ideal ring, and $R+M\subsetneqq S$ for each $M\in\mathcal{V}(I)$.
\end{itemize}
Then $I$ is an $R$-conductor ideal of $S$.
\end{Pro}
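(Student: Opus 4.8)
The plan is to prove both cases by contraposition, exactly in the spirit of Theorem 2.7. By Lemma 2.1.1 it suffices to show that $(R+I:_S S)=I$; since $I\subseteq (R+I:_S S)$ always holds, I set $L=(R+I:_S S)$ and assume for contradiction that $I\subsetneqq L$. Note that $L\subseteq R+I$, because $1\in S$ forces $x\in xS\subseteq R+I$ for each $x\in L$. In each case I will produce a prime $M\in\mathcal{V}(I)$ that violates the hypothesis imposed in that case.

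For condition 1 I would invoke the Noetherian machinery directly. Applying Lemma 2.1.4 to the ideal $L$ (using that $S/I$ is Noetherian and $I\subsetneqq L$) yields an ideal $K$ with $I\subsetneqq K\subseteq L$ and $M:=(I:_S K)\in\mathcal{V}(I)$. Since $I\subsetneqq K\subseteq L=(R+I:_S S)$, Lemma 2.1.2 gives $(I:_R M)=(I:_R (I:_S K))\nsubseteq I$. As $M\in\mathcal{V}(I)$, this contradicts the standing assumption $(I:_R M)\subseteq I$ for all $M\in\mathcal{V}(I)$, and the case is complete.

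For condition 2 I would replace Noetherianity by the principal ideal ring hypothesis to force cyclicity, just as in Theorem 2.7. Using $L\subseteq R+I$ and the modular law (with $I\subseteq L$), I get $L=(L\cap R)+I$. Since $(L\cap R)/(I\cap R)$ is an ideal of the principal ideal ring $R/(I\cap R)$, it is generated by some $a+(I\cap R)$ with $a\in L\cap R$, whence $L=Ra+I$ and $L/I=R(a+I)$ is a cyclic $R$-module, hence also a cyclic $S$-module (as $a\in L$ gives $Sa\subseteq L$). Therefore $L/I\cong_S S/M'$ with $M'=(I:_S L)=\mathrm{Ann}_S(a+I)$, and $S/M'\cong_R L/I$ is a cyclic $R$-module, so Proposition 2.6.1 gives $R+M'=S$. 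Now $M'$ is a proper ideal of $S$ (otherwise $L=SL\subseteq I$, contradicting $I\subsetneqq L$) that contains $I$; choosing a maximal ideal $M\supseteq M'$ gives $M\in\mathcal{V}(I)$ and $R+M\supseteq R+M'=S$, i.e.\ $R+M=S$, contradicting the hypothesis $R+M\subsetneqq S$ for all $M\in\mathcal{V}(I)$.

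The one genuinely new point, and the main obstacle, lies in condition 2: unlike in Theorem 2.7, without Noetherianity the conductor $M'=(I:_S L)$ need not be prime, so it cannot itself serve as the required $M\in\mathcal{V}(I)$. The resolution is to observe that the property we need, $R+M=S$, is monotone under inclusions of ideals, so it suffices to pass to any maximal ideal above $M'$; this is precisely why condition 2 is phrased with the one-sided constraint $R+M\subsetneqq S$ rather than the two-sided alternative of Theorem 2.7. The remaining steps—the modular-law identity $L=(L\cap R)+I$, the cyclicity bookkeeping, and the identification $M'=(I:_S L)=\mathrm{Ann}_S(a+I)$—are routine.
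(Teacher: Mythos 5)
Your proof is correct and follows essentially the same route as the paper's: case 1 is verbatim the paper's use of Lemmas 2.1.4 and 2.1.2, and in case 2 the paper likewise sets $J=(I:_S L)$, shows $L/I$ is a cyclic $R$-module (hence a cyclic $S$-module) via the principal ideal ring hypothesis, applies Proposition 2.6.1 to get $R+J=S$, and then passes to some $M\in\mathcal{V}(I)$ containing $J$ exactly as you do. The step you flag as genuinely new---replacing the possibly non-prime conductor $(I:_S L)$ by a prime ideal above it and using monotonicity of $R+M=S$---is also present, if stated tersely, in the paper's proof.
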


\begin{proof} Set $L=(R+I:_S S)$. Let $I$ be not an $R$-conductor ideal of $S$. Then $I\subsetneqq L$ by Lemma 2.1.1.\\
First let $S/I$ be a Noetherian ring. By Lemma 2.1.4 there is some ideal $J$ of $S$ such that $I\subsetneqq J\subseteq L$, and $(I:_S J)\in\mathcal{V}(I)$. Set $M=(I:_S J)$. It follows by Lemma 2.1.2 that $(I:_R M)\nsubseteq I$.\\
Next let $R/I\cap R$ be a principal ideal ring. Set $J=(I:_S L)$. There is some $M\in\mathcal{V}(I)$ such that $J\subseteq M$. Note that $L\cap R/I\cap R=(x+I\cap R)_R$ for some $x\in L\cap R$. Therefore, $L/I=L\cap (R+I)/I=(L\cap R)+I/I=(x+I)_R$. We infer that $L/I$ is a cyclic $S$-module, hence $S/J\cong_S L/I$. It follows that $S/J\cong_R L/I$ is a cyclic $R$-module, and thus $S=R+J$ by Proposition 2.6.1. Consequently, $S=R+M$.
\end{proof}

In the following we deal with the question whether Theorem 2.7 is a generalization of \cite[Theorem 1.2]{LP} and Furtw\"angler's result. If $S$ is a Dedekind domain, $I$ is a nonzero ideal of $S$, and $M\in\max(S)$, then let $v_M(I)$ denote the $M$-adic exponent of $I$ (i.e., $v_M(I)$ is the largest $l\in\mathbb{N}_0$ such that $I\subseteq M^l$). The purpose of the next result is to describe the condition ``$(I:_R M)\subseteq I$'' in Theorem 2.7 in more detail.

\begin{Pro} Let $S$ be a Dedekind domain, $R$ a subring of $S$ that is a Dedekind domain, $I$ and $L$ ideals of $S$ such that $I\cap R\not=\{0\}$ and $I\subseteq L$, and $M\in\mathcal{V}(I)$. For $Q\in\mathcal{V}(I)$, set $e_Q=v_Q((Q\cap R)S)$.
\begin{itemize}
\item[\textbf{\textnormal{1.}}] $v_{M\cap R}(M^l\cap R)=\lceil\frac{l}{e_M}\rceil$, for every $l\in\mathbb{N}_0$.
\item[\textbf{\textnormal{2.}}] $v_{M\cap R}(L\cap R)=\max\{\lceil\frac{v_Q(L)}{e_Q}\rceil\mid Q\in\mathcal{V}(I),Q\cap R=M\cap R\}$.
\item[\textbf{\textnormal{3.}}] $(I:_R M)\subseteq I$ if and only if $e_M\nmid (v_M(I)-1)$ or $\frac{v_M(I)-1}{e_M}<\frac{v_Q(I)}{e_Q}$ for some $Q\in\mathcal{V}(I)\setminus\{M\}$ with $Q\cap R=M\cap R$.
\end{itemize}
\end{Pro}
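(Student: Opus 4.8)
The engine behind all three parts is the ramification identity $v_Q(x)=e_Q\,v_{Q\cap R}(x)$, valid for every nonzero $x\in R$ and every prime $Q$ of $S$ with $Q\cap R\neq\{0\}$. I would begin by establishing it: extend the factorization of $xR$ to $S$ and observe that a prime $Q$ of $S$ divides $\mathfrak q'S$ (for a prime $\mathfrak q'$ of $R$) only when $\mathfrak q'=Q\cap R$, in which case the exponent is exactly $e_Q=v_Q((Q\cap R)S)$. Every prime that intervenes here lies over a nonzero prime of $R$, because $Q\in\mathcal V(I)$ forces $Q\cap R\supseteq I\cap R\neq\{0\}$; in particular $e_Q\geq 1$, so dividing by it is harmless. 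The second workhorse is the standard fact that in a Dedekind domain $x$ lies in an ideal $A$ exactly when $v_Q(x)\geq v_Q(A)$ for all primes $Q$, which I use both in $S$ and in $R$.

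Part 1 is then a one-line computation: with $\mathfrak p=M\cap R$, membership $x\in M^l\cap R$ means $v_M(x)=e_M v_{\mathfrak p}(x)\geq l$, i.e. $v_{\mathfrak p}(x)\geq\lceil l/e_M\rceil$ since $v_{\mathfrak p}(x)\in\mathbb N_0$, so $M^l\cap R=\mathfrak p^{\lceil l/e_M\rceil}$. For Part 2 put $m=\max\{\lceil v_Q(L)/e_Q\rceil\mid Q\in\mathcal V(I),\,Q\cap R=\mathfrak p\}$ (the set is nonempty as $M$ belongs to it, and $L\cap R\supseteq I\cap R\neq\{0\}$). The inequality $v_{\mathfrak p}(L\cap R)\geq m$ is immediate: any nonzero $x\in L\cap R$ satisfies $e_Q v_{\mathfrak p}(x)=v_Q(x)\geq v_Q(L)$ for each $Q$ over $\mathfrak p$, and the primes $Q$ over $\mathfrak p$ outside $\mathcal V(I)$ may be dropped because $I\subseteq L$ together with $v_Q(I)=0$ forces $v_Q(L)=0$. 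The reverse inequality is the only point needing genuine work: I must exhibit $x\in L\cap R$ with $v_{\mathfrak p}(x)$ equal to $m$, not merely at least $m$. The approximation theorem for the Dedekind domain $R$ supplies a nonzero $x$ with $v_{\mathfrak p}(x)=m$ and with $v_{\mathfrak q}(x)=\max\{\lceil v_Q(L)/e_Q\rceil\mid Q\cap R=\mathfrak q\}$ at each of the finitely many other primes $\mathfrak q$ below the support of $L$; the ramification identity then yields $v_Q(x)\geq v_Q(L)$ at every $Q$, so $x\in L\cap R$ and $v_{\mathfrak p}(L\cap R)=m$.

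For Part 3 I first note $I\cap R\subseteq(I:_R M)$ (if $x\in I\cap R$ then $xM\subseteq IM\subseteq I$), so the assertion $(I:_R M)\subseteq I$ is equivalent to the equality of ideals $(I:_R M)=I\cap R$ in $R$. Writing $(I:_R M)=(I:_S M)\cap R$ and using $v_M(I)\geq 1$, the ideal $(I:_S M)$ of $S$ has $M$-exponent $v_M(I)-1$ and agrees with $I$ at every other prime. Since $M$ lies over $\mathfrak p$, the two ideals $(I:_R M)$ and $I\cap R$ are carved out of $R$ by identical valuation conditions at every prime $\mathfrak q\neq\mathfrak p$, so they coincide iff they share the same $\mathfrak p$-exponent. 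Part 2 computes these exponents as $a=\max(\lceil v_M(I)/e_M\rceil,c)$ for $I$ and $b=\max(\lceil (v_M(I)-1)/e_M\rceil,c)$ for $(I:_S M)$, where $c=\max(\{0\}\cup\{\lceil v_Q(I)/e_Q\rceil\mid Q\in\mathcal V(I)\setminus\{M\},\,Q\cap R=\mathfrak p\})$; one always has $b\leq a$, and the desired inclusion holds iff $a=b$.

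It remains to decide when $a=b$, which is pure ceiling arithmetic. The key elementary fact is that $\lceil(v_M(I)-1)/e_M\rceil<\lceil v_M(I)/e_M\rceil$ iff $e_M\mid(v_M(I)-1)$, the gap then being exactly $1$. If $e_M\nmid(v_M(I)-1)$ the two ceilings coincide, forcing $a=b$ whatever $c$ is; this is the first disjunct. If $e_M\mid(v_M(I)-1)$, set $t=(v_M(I)-1)/e_M$, so $a=\max(t+1,c)$ and $b=\max(t,c)$; then $a=b$ iff $c>t$, and since $t\geq 0$ this occurs iff some $Q\in\mathcal V(I)\setminus\{M\}$ over $\mathfrak p$ has $\lceil v_Q(I)/e_Q\rceil>t$, i.e. $v_Q(I)/e_Q>(v_M(I)-1)/e_M$, which is the second disjunct. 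Combining the two cases gives the stated equivalence. I expect the main obstacle to be bookkeeping rather than depth: keeping the $\mathcal V(I)$-restriction legitimate, securing the ``exactly $m$'' value through approximation rather than a mere inequality, and aligning the two ceiling cases with the two disjuncts, all simultaneously and consistently.
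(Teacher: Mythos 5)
Your proof is correct, and its skeleton matches the paper's (part 1 is the engine, part 2 reduces to part 1, and part 3 reduces to comparing exponents at $M\cap R$ followed by ceiling arithmetic), but the machinery you use is genuinely different. The paper argues purely with ideals: part 1 by noting that $M^l\cap R$ is $M\cap R$-primary, hence a power of $M\cap R$, and bounding that exponent from both sides; part 2 by writing $L\cap R=\bigcap_{Q\in\mathcal{V}(I)}(Q^{v_Q(L)}\cap R)$ and using that the valuation of a finite intersection of ideals is the maximum of the valuations, so the exact exponent comes for free; and the agreement of $(I:_S M)\cap R$ and $I\cap R$ away from $M\cap R$ in part 3 via the coprime factorization $I=JM^{v_M(I)}$ with $J+M=S$. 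You instead argue element-wise from the ramification identity $v_Q(x)=e_Q\,v_{Q\cap R}(x)$ and pay for the exactness of the exponent in part 2 with the approximation theorem. Both routes are sound: yours makes the translation between valuations of $R$ and valuations of $S$ transparent, and it settles the ``identical conditions away from $M\cap R$'' step of part 3 without any factorization trick (one only needs the easy remark that both contracted ideals are supported at primes of $R$ lying below $\mathcal{V}(I)$, which follows from the same membership description). The paper's route needs no approximation theorem at all --- and in fact you could avoid it too, since $\{x\in R\mid v_P(x)\geq d_P\text{ for all }P\}$ is exactly the ideal $\prod_P P^{d_P}$, whose $P$-exponents are precisely the $d_P$ by unique factorization of ideals; this yields the reverse inequality in part 2 without exhibiting an element of exact valuation. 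Finally, your two-case divisibility analysis ($e_M\mid v_M(I)-1$ or not) is equivalent to the paper's chain of inequalities between maxima of ceilings; both rest on the same elementary fact that $\lceil (n-1)/e\rceil<\lceil n/e\rceil$ exactly when $e\mid n-1$.
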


\begin{proof} \textbf{1.} Let $l\in\mathbb{N}_0$. Without restriction let $l\not=0$. Set $m=v_{M\cap R}(M^l\cap R)$. Observe that $M^l\cap R$ is $M\cap R$-primary, hence $M^l\cap R=(M\cap R)^m$. We have $((M\cap R)S)^m=(M^l\cap R)S\subseteq M^l$, hence $l=v_M(M^l)\leq v_M((M\cap R)S)^m)=me_M$. Therefore, $\frac{l}{e_M}\leq m$, and thus $\lceil\frac{l}{e_M}\rceil\leq m$. Since $\lceil\frac{l}{e_M}\rceil e_M\geq l$, we obtain that $(M\cap R)^{\lceil\frac{l}{e_M}\rceil}\subseteq ((M\cap R)S)^{\lceil\frac{l}{e_M}\rceil}\subseteq M^{\lceil\frac{l}{e_M}\rceil e_M}\subseteq M^l$, hence $(M\cap R)^{\lceil\frac{l}{e_M}\rceil}\subseteq M^l\cap R=(M\cap R)^m$. This implies that $m\leq \lceil\frac{l}{e_M}\rceil$. Consequently, $m=\lceil\frac{l}{e_M}\rceil$.\\
\textbf{2.} Note that $L\cap R=(\prod_{Q\in\mathcal{V}(I)} Q^{v_Q(L)})\cap R=(\bigcap_{Q\in\mathcal{V}(I)} Q^{v_Q(L)})\cap R=\bigcap_{Q\in\mathcal{V}(I)} (Q^{v_Q(L)}\cap R)$. We infer by 1 that $v_{M\cap R}(L\cap R)=\max\{v_{M\cap R}(Q^{v_Q(L)}\cap R)\mid Q\in\mathcal{V}(I)\}=\max\{\lceil\frac{v_Q(L)}{e_Q}\rceil\mid Q\in\mathcal{V}(I),Q\cap R=M\cap R\}$.\\
\textbf{3.} Let $P\in\max(R)\setminus\{M\cap R\}$. Clearly, $v_P(M^l\cap R)=0$ for all $l\in\mathbb{N}_0$ (since $M^l\cap R$ is either $R$ or $M\cap R$-primary). We have $J+M=S$, and $I=JM^{v_M(I)}$ for some ideal $J$ of $S$. We infer that $v_P((I:_S M)\cap R)=v_P(JM^{v_M(I)-1}\cap R)=v_P((J\cap R)\cap (M^{v_M(I)-1}\cap R))=\max(v_P(J\cap R),v_P(M^{v_M(I)-1}\cap R))=v_P(J\cap R)=\max(v_P(J\cap R),v_P(M^{v_M(I)}\cap R))=v_P((J\cap R)\cap (M^{v_M(I)}\cap R))=v_P(JM^{v_M(I)}\cap R)=v_P(I\cap R)$.\\
We conclude that $v_P((I:_S M)\cap R)=v_P(I\cap R)$ for all $P\in\max(R)\setminus\{M\cap R\}$. It follows by 2 that
\begin{align*}
&(I:_R M)\subseteq I\Leftrightarrow (I:_S M)\cap R\subseteq I\cap R\Leftrightarrow v_{M\cap R}(I\cap R)\leq v_{M\cap R}((I:_S M)\cap R)\Leftrightarrow\\
&\max\{\lceil\frac{v_Q(I)}{e_Q}\rceil\mid Q\in\mathcal{V}(I),Q\cap R=M\cap R\}\leq\max\{\lceil\frac{v_Q((I:_S M))}{e_Q}\rceil\mid Q\in\mathcal{V}(I),Q\cap R=M\cap R\}\Leftrightarrow\\
&\max\{\lceil\frac{v_Q(I)}{e_Q}\rceil\mid Q\in\mathcal{V}(I),Q\cap R=M\cap R\}\leq\\
&\max(\{\lceil\frac{v_Q(I)}{e_Q}\rceil\mid Q\in\mathcal{V}(I)\setminus\{M\},Q\cap R=M\cap R\}\cup\{\lceil\frac{v_M(I)-1}{e_M}\rceil\})\Leftrightarrow\\
&\lceil\frac{v_M(I)}{e_M}\rceil\leq\max(\{\lceil\frac{v_Q(I)}{e_Q}\rceil\mid Q\in\mathcal{V}(I)\setminus\{M\},Q\cap R=M\cap R\}\cup\{\lceil\frac{v_M(I)-1}{e_M}\rceil\})\Leftrightarrow\\
&\lceil\frac{v_M(I)}{e_M}\rceil\leq\lceil\frac{v_M(I)-1}{e_M}\rceil\textnormal{ or }\lceil\frac{v_M(I)}{e_M}\rceil\leq\lceil\frac{v_Q(I)}{e_Q}\rceil\textnormal{ for some }Q\in\mathcal{V}(I)\setminus\{M\}\textnormal{ with }Q\cap R=M\cap R\Leftrightarrow\\
&e_M\nmid (v_M(I)-1)\textnormal{ or }\lceil\frac{v_M(I)}{e_M}\rceil\leq\lceil\frac{v_Q(I)}{e_Q}\rceil\textnormal{ for some }Q\in\mathcal{V}(I)\setminus\{M\}\textnormal{ with }Q\cap R=M\cap R\Leftrightarrow\\
&e_M\nmid (v_M(I)-1)\textnormal{ or }\frac{v_M(I)-1}{e_M}<\frac{v_Q(I)}{e_Q}\textnormal{ for some }Q\in\mathcal{V}(I)\setminus\{M\}\textnormal{ with }Q\cap R=M\cap R.
\end{align*}
\end{proof}

We provide a few situations where the ``most important ingredient'' of Theorem 2.7 is satisfied.

\begin{Le} Let $S$ be a ring, $R$ a subring of $S$, and $I$ an ideal of $S$ such that $I\cap R\not=\{0\}$. Suppose that one of the following conditions holds\textnormal{:}
\begin{itemize}
\item[\textbf{\textnormal{1.}}] $R$ is a Dedekind domain.
\item[\textbf{\textnormal{2.}}] $R$ is a one-dimensional domain of finite character, and $I$ is a radical ideal of $S$.
\end{itemize}
Then $R/I\cap R$ is a principal ideal ring.
\end{Le}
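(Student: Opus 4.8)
The plan is to handle the two conditions separately but to funnel both into the same endgame: exhibit $I\cap R$ as a finite intersection of pairwise comaximal ideals of $R$ whose individual quotients are principal, and then apply the Chinese Remainder Theorem, using that a finite product of principal ideal rings is again a principal ideal ring (if $I_j=(a_j)$ in $A_j$, then $\prod_j I_j$ is generated by $(a_1,\dots,a_k)$ in $\prod_j A_j$). Throughout write $\mathfrak{a}=I\cap R$; by hypothesis this is a nonzero ideal of the one-dimensional domain $R$.

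Under condition 1, I would appeal to the structure theory of Dedekind domains. Factoring $\mathfrak{a}=\prod_{i=1}^{k}\mathfrak{p}_i^{e_i}$ into powers of distinct nonzero (hence maximal) primes, the powers $\mathfrak{p}_i^{e_i}$ are pairwise comaximal, so $R/\mathfrak{a}\cong\prod_{i=1}^{k} R/\mathfrak{p}_i^{e_i}$. Each factor is isomorphic to $R_{\mathfrak{p}_i}/\mathfrak{p}_i^{e_i}R_{\mathfrak{p}_i}$, a quotient of the discrete valuation ring $R_{\mathfrak{p}_i}$ and therefore a principal ideal ring; a finite product of these is a principal ideal ring, giving the claim.

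Under condition 2, the first step is to observe that $\mathfrak{a}=I\cap R$ is radical in $R$, since the contraction of a radical ideal is radical: if $x\in R$ and $x^n\in\mathfrak{a}$, then $x^n\in I$, so $x\in I$ as $I$ is radical, whence $x\in\mathfrak{a}$. Because $R$ is one-dimensional, every nonzero prime of $R$ is maximal, so the nonzero radical ideal $\mathfrak{a}$ equals the intersection of the maximal ideals containing it. The finite-character hypothesis now enters decisively: each such maximal ideal contains a fixed nonzero $a\in\mathfrak{a}$, and $a$ lies in only finitely many maximal ideals, so $\mathfrak{a}=\bigcap_{i=1}^{k}\mathfrak{m}_i$ for finitely many distinct maximal ideals. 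These are pairwise comaximal, and the Chinese Remainder Theorem yields $R/\mathfrak{a}\cong\prod_{i=1}^{k} R/\mathfrak{m}_i$, a finite product of fields, which is a principal ideal ring.

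I expect the only genuine obstacle to be the finite-character step of condition 2: without it, $\mathfrak{a}$ could be an intersection of infinitely many maximal ideals and $R/\mathfrak{a}$ an infinite product of fields, which fails to be Noetherian and hence is not a principal ideal ring—so this hypothesis is precisely what makes the argument go through, and it is not needed in condition 1 because factoring the ideal automatically produces only finitely many primes. In condition 1 the corresponding input, that a quotient of a Dedekind domain by a nonzero ideal is a principal ideal ring, is entirely standard and is supplied by the factorization-and-localization argument above.
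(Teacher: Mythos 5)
Your proof is correct and follows essentially the same route as the paper: for condition 2 the paper argues exactly as you do (the contraction $I\cap R$ is a radical ideal of $R$, one-dimensionality plus finite character make it a finite intersection of maximal ideals, and the Chinese Remainder Theorem exhibits $R/I\cap R$ as a finite product of fields). For condition 1 the paper simply cites Gilmer's theorem that the quotient of a Dedekind domain by a nonzero ideal is a principal ideal ring; your factorization-plus-CRT-plus-DVR argument is just the standard proof of that cited fact, so the two treatments coincide in substance.
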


\begin{proof} Case 1: $R$ is a Dedekind domain. The assertion follows from \cite[Theorem 38.5]{Gi}.\\
Case 2: $R$ is a one-dimensional domain of finite character, and $I$ a radical ideal of $S$. Clearly, $I\cap R$ is a radical ideal of $R$, and thus $I\cap R$ is a finite intersection of maximal ideals of $R$. Therefore, $R/I\cap R$ is isomorphic to a finite direct product of fields, hence $R/I\cap R$ is a principal ideal ring.
\end{proof}

Now we are prepared to rediscover a variant of P. Furtw\"angler's original result.

\begin{Co} Let $S$ be a Dedekind domain, $R$ a subring of $S$ that is a Dedekind domain, and $I$ an ideal of $S$ such that $I\cap R\not=\{0\}$. For $Q\in\mathcal{V}(I)$, set $e_Q=v_Q((Q\cap R)S)$ and $f_Q=\dim_{R/Q\cap R}(S/Q)$. Then $I$ is an $R$-conductor ideal of $S$ if and only if every $M\in\mathcal{V}(I)$ satisfies one of the following conditions\textnormal{:}
\begin{itemize}
\item[\textbf{\textnormal{a.}}] $f_M\geq 2$.
\item[\textbf{\textnormal{b.}}] $e_M\nmid (v_M(I)-1)$.
\item[\textbf{\textnormal{c.}}] $\frac{v_M(I)-1}{e_M}<\frac{v_Q(I)}{e_Q}$ for some $Q\in\mathcal{V}(I)\setminus\{M\}$ with $Q\cap R=M\cap R$.
\end{itemize}
\end{Co}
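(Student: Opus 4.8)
The plan is to deduce the statement directly from Theorem 2.7, translating each of its two alternatives through Propositions 2.6 and 2.10. First I would check that the hypotheses of Theorem 2.7 are met. Since $S$ is a Dedekind domain it is Noetherian, so $S/I$ is a Noetherian ring. Since $R$ is a Dedekind domain and $I\cap R\neq\{0\}$, Lemma 2.11.1 gives that $R/I\cap R$ is a principal ideal ring. Hence Theorem 2.7 applies, and $I$ is an $R$-conductor ideal of $S$ if and only if every $M\in\mathcal{V}(I)$ satisfies $R+M\subsetneqq S$ or $(I:_R M)\subseteq I$.

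Next I would record two maximality facts that let me invoke Proposition 2.6 in its sharp form. Since $I\cap R\neq\{0\}$ forces $I\neq\{0\}$, every $M\in\mathcal{V}(I)$ is a nonzero prime of the Dedekind domain $S$, hence $M\in\max(S)$; likewise $M\cap R\supseteq I\cap R\neq\{0\}$ is a nonzero prime of the Dedekind domain $R$, hence $M\cap R\in\max(R)$. With $M\cap R\in\max(R)$ established, the equivalence of conditions \textbf{b} and \textbf{d} in Proposition 2.6.2 reads: $R+M\subsetneqq S$ if and only if $\dim_{R/M\cap R}(S/M)\geq 2$, that is, if and only if $f_M\geq 2$. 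This identifies the first alternative of Theorem 2.7 with condition \textbf{a}.

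Finally I would apply Proposition 2.10.3 (taking the ideal $L$ there to be $I$, so that its hypotheses hold), which asserts that $(I:_R M)\subseteq I$ holds if and only if $e_M\nmid(v_M(I)-1)$ or $\frac{v_M(I)-1}{e_M}<\frac{v_Q(I)}{e_Q}$ for some $Q\in\mathcal{V}(I)\setminus\{M\}$ with $Q\cap R=M\cap R$; that is, if and only if condition \textbf{b} or condition \textbf{c} holds. Substituting both translations into the criterion of Theorem 2.7, the disjunction ``$R+M\subsetneqq S$ or $(I:_R M)\subseteq I$'' becomes ``$\textbf{a}$ or $\textbf{b}$ or $\textbf{c}$'' for each $M\in\mathcal{V}(I)$, which is precisely the asserted condition. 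I do not anticipate a serious obstacle, as the quoted results carry the whole argument; the only point demanding attention is the verification that both $M$ and $M\cap R$ are maximal, which is what permits Proposition 2.6 to be used in the form involving $f_M$ rather than in the coarser form involving $R+M\subsetneqq S$.
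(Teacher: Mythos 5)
Your proposal is correct and follows exactly the route the paper intends: its proof of this corollary is the one-line citation of Proposition 2.6.2, Theorem 2.7, Proposition 2.10.3, and Lemma 2.11.1, and you have supplied precisely the missing verifications (Noetherianity of $S/I$, the principal ideal ring property of $R/I\cap R$, and the maximality of $M$ and $M\cap R$ needed to read Proposition 2.6.2 in the form ``$R+M\subsetneqq S$ if and only if $f_M\geq 2$''). No gaps; the translation of $(I:_R M)\subseteq I$ via Proposition 2.10.3 with $L=I$ is exactly as the paper uses it.
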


\begin{proof} This is an easy consequence of Proposition 2.6.2, Theorem 2.7, Proposition 2.10.3, and Lemma 2.11.1.
\end{proof}

\begin{Co} Let $S$ be a Dedekind domain, $R$ a subring of $S$ that is Noetherian and one-dimensional, and $I$ a radical ideal of $S$ such that $I\cap R\not=\{0\}$. For $Q\in\mathcal{V}(I)$, set $f_Q=\dim_{R/Q\cap R}(S/Q)$. Then $I$ is an $R$-conductor ideal of $S$ if and only if for every $M\in\mathcal{V}(I)$ we have $f_M\geq 2$ or $(I:_R M)\subseteq I$.
\end{Co}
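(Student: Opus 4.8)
The plan is to reduce everything to Theorem 2.7 by checking its two hypotheses and then translating the criterion it produces into the language of the $f_M$; the condition $(I:_R M)\subseteq I$ is left untouched, so the only work is to replace $R+M\subsetneqq S$ by $f_M\geq 2$.

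First I would verify the standing hypotheses of Theorem 2.7. Since $I\cap R\not=\{0\}$ we have $I\not=\{0\}$, so $S/I$ is a quotient of the Noetherian domain $S$ and is therefore Noetherian. For the second hypothesis I want $R/(I\cap R)$ to be a principal ideal ring, which I would get from Lemma 2.11.2. To apply it, note that $R$, being a subring of the domain $S$, is itself a domain, and that a Noetherian one-dimensional domain has finite character: for any nonzero $a\in R$ the ring $R/aR$ is Noetherian of dimension $0$, hence Artinian, so it has only finitely many maximal ideals and thus $a$ lies in only finitely many maximal ideals of $R$. As $I$ is radical, Lemma 2.11.2 then yields that $R/(I\cap R)$ is a principal ideal ring.

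Next I would pin down $\mathcal{V}(I)$ and the induced primes of $R$. Because $I\not=\{0\}$ and $S$ is a one-dimensional domain, every prime of $S$ containing $I$ is maximal, so $\mathcal{V}(I)\subseteq\max(S)$. Moreover, for $M\in\mathcal{V}(I)$ the prime $M\cap R$ contains $I\cap R\not=\{0\}$, hence is a nonzero prime of the one-dimensional domain $R$ and therefore lies in $\max(R)$. In particular $R/(M\cap R)$ is a field and $f_M=\dim_{R/(M\cap R)}(S/M)$ is well defined.

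The final step is the translation. Theorem 2.7 gives that $I$ is an $R$-conductor ideal of $S$ if and only if for every $M\in\mathcal{V}(I)$ one has $R+M\subsetneqq S$ or $(I:_R M)\subseteq I$. Fix such an $M$. Since $M\in\max(S)$ and $M\cap R\in\max(R)$, the equivalence of conditions (b) and (d) in Proposition 2.6.2 shows that $R+M\subsetneqq S$ if and only if $\dim_{R/(M\cap R)}(S/M)\geq 2$, that is, $f_M\geq 2$. Substituting this equivalence into the criterion of Theorem 2.7 gives precisely the asserted condition. The only step needing any thought is the finite-character verification used to invoke Lemma 2.11.2; the rest is a direct substitution.
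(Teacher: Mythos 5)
Your proposal is correct and takes essentially the same route as the paper: the paper's entire proof is the citation of Proposition 2.6.2, Theorem 2.7 and Lemma 2.11.2, and you invoke exactly these three results, merely filling in the details the paper leaves implicit (the finite-character argument for the Noetherian one-dimensional domain $R$, and the observation that $M\cap R\in\max(R)$ so that condition d of Proposition 2.6.2 reduces to $f_M\geq 2$).
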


\begin{proof} This follows from Proposition 2.6.2, Theorem 2.7 and Lemma 2.11.2.
\end{proof}

The remark after Corollary 2.4 shows that Corollary 2.12 is a generalization of \cite[Theorem 1.2]{LP}. Furthermore, it follows that Theorem 2.7 is an enhancement of \cite[Theorem 1.2]{LP}. Note that Corollary 2.13 is an extension of Corollary 2.12 if we restrict our attention to radical ideals. In the last part of this note we want to point out that most of the conditions in Theorem 2.7, Corollary 2.8, and Proposition 2.9 are crucial, and cannot easily be replaced by weaker properties.

\begin{Ex} There is some ring $S$, some subring $R$ of $S$ such that $R/(R:_S S)$ and $S/(R:_S S)$ are fields, and some ideals $I$ and $J$ of $S$ that are not $R$-conductor ideals of $S$ such that for every $M\in\mathcal{V}(I)$, $R+M\subsetneqq S$, for every $N\in\mathcal{V}(J)$, $R+N\subsetneqq S$ or $(J:_R N)\subseteq J$, $(J:_R A)\subseteq J$ for all ideals $A$ of $S$ such that $J\subsetneqq A$, $S/I$ is an Artinian principal ideal ring, $S/J$ is Noetherian, and $R/I\cap R$ is an Artinian ring whose ideals are 2-generated.
\end{Ex}

\begin{proof} Set $R=\mathbb{Z}[\sqrt{5}]$, $S=\mathbb{Z}[\frac{1+\sqrt{5}}{2}]$, $I=4S$ and $J=\{0\}$. Observe that $S$ is a Dedekind domain, $R$ is a Noetherian one-dimensional domain whose ideals are 2-generated, and $I$ and $J$ are ideals of $S$. Consequently, $S/I$ is an Artinian principal ideal ring, $S/J$ is Noetherian and $R/I\cap R$ is an Artinian ring whose ideals are 2-generated. It is straightforward to prove that $(R:_S S)=2S\in\max(R)\cap\max(S)$. Therefore, $R/(R:_S S)$ and $S/(R:_S S)$ are fields. Furthermore, $(R:_S S)\nsubseteq I$, and $(R:_S S)\nsubseteq J$, hence $I$ and $J$ are not $R$-conductor ideals of $S$. Since $\mathcal{V}(I)=\{(R:_S S)\}$ we infer that for every $M\in\mathcal{V}(I)$, $R+M=R\subsetneqq S$. If $A$ is an ideal of $S$ such that $J\subsetneqq A$, then $(J:_R A)=\{0\}\subseteq J$. Let $N\in\mathcal{V}(J)$. If $N\supsetneqq J$, then $(J:_R N)=\{0\}\subseteq J$, and if $N=J$, then $R+N=R\subsetneqq S$.
\end{proof}

This example shows that the property ``$(R:_S S)\subseteq I$'' in Corollary 2.8 is crucial. Moreover, it shows that ``$R/I\cap R$ is a principal ideal ring'' in Theorem 2.7 and Proposition 2.9.2 cannot be replaced by ``every ideal of $R/I\cap R$ is 2-generated''. The proof of Example 2.14 points out that Corollary 2.13 is not valid without the assumption that $I$ is a radical ideal of $S$. Recall that a ring $R$ is called a B\'ezout ring if every finitely generated ideal of $R$ is principal.

\begin{Ex} There exists some ring $S$, some subring $R$ of $S$, and some ideal $I$ of $S$ that is not an $R$-conductor ideal of $S$ such that $R/I\cap R$ is a zero-dimensional B\'ezout ring, and $(I:_R M)\subseteq I$ for all $M\in\mathcal{V}(I)$.
\end{Ex}

\begin{proof} Let $R$ be a one-dimensional valuation domain that is not Noetherian, $K$ a field of quotients of $R$ and $X$ an indeterminate over $K$. Let $P$ be the maximal ideal of $R$. Clearly, $P$ is not principal, and thus $(R:_K P)=R$. Pick $x\in P\setminus\{0\}$. Set $S=R+XK[X]$, and $I=xR+XK[X]$. Obviously, $S$ is a ring, $R$ is a subring of $S$ and $I$ is an ideal of $S$ such that $I\not=S$. Moreover, $R+I=S$, hence $I\subsetneqq S=(R+I:_S S)$. Therefore, $I$ is not an $R$-conductor ideal of $S$ by Lemma 2.1.1. Observe that $\mathcal{V}(I)=\{P+XK[X]\}$, and $(I:_R P+XK[X])=(xR:_R P)=x(R:_K P)\cap R=xR\subseteq I$. Since $R$ is a valuation domain, it follows that the ideals of $R/I\cap R$ form a chain, hence $R/I\cap R$ is a B\'ezout ring. Since $I\cap R=xR\not=\{0\}$, and $R$ is a one-dimensional domain we obtain that $R/I\cap R$ is zero-dimensional.
\end{proof}

In Lemma 2.1.4, Theorem 2.7, and Proposition 2.9.1 it is possible to replace ``$S/I$ is a Noetherian ring'' by the weaker ``$S/I$ satisfies the ACC on annihilator ideals''. However, Example 2.15 shows that we cannot drop this condition in Proposition 2.9.1.

\bigskip
\textbf{Acknowledgement.} This work was supported by the Austrian Science Fund FWF, Project number P26036-N26. I want to thank the referee for his/her careful reading of the paper.


\begin{thebibliography}{10}
\bibitem{F}
P. Furtw\"angler, \emph{\"Uber die F\"uhrer von Zahlringen}, Sitzungsberichte Akademie Wien \textbf{128} (1919), 239--245.
\bibitem{GHK}
A. Geroldinger, F. Halter-Koch, \emph{Non-unique factorizations\textnormal{:} Algebraic, combinatorial and analytic theory}, Pure Appl. Math., Chapman and Hall/CRC, Boca Raton, FL, 2006.
\bibitem{Gi}
R. Gilmer, \emph{Multiplicative ideal theory}, Queen's Papers Pure Appl. Math. \textbf{90}, Kingston, ON, 1992.
\bibitem{Gr}
H. Grell, \emph{Zur Theorie der Ordnungen in algebraischen Zahl- und Funktionenk\"orpern}, Math. Ann. \textbf{97} (1927), 524--558.
\bibitem{LP}
G. Lettl, C. Prabpayak, \emph{Conductor ideals of orders in algebraic number fields}, Arch. Math. (Basel) \textbf{103} (2014), 133--138.
\bibitem{LPP}
G. Lettl, C. Prabpayak, \emph{Orders in cubic number fields}, manuscript.
\bibitem{P}
C. Prabpayak, \emph{Orders in pure cubic number fields}, PhD-thesis, Univ. Graz. Grazer Math. Ber. \textbf{361} (2014).
\end{thebibliography}
\end{document}